\tiny\color{gray},
\tikzset{
every node/.style={circle, inner sep=2pt}
}
\newtheorem{theorem}{Theorem}
\newtheorem{lemma}[theorem]{Lemma}
\newtheorem{proposition}[theorem]{Proposition}
\newtheorem{corollary}[theorem]{Corollary}
\theoremstyle{definition}
\newtheorem{definition}[theorem]{Definition}
\newtheorem{observation}[theorem]{Observation}
\newtheorem{remark}[theorem]{Remark}
\newtheorem{example}[theorem]{Example}
\newtheorem{problem}{Problem}
\newtheorem{question}[theorem]{Question}
\newenvironment{thm}{\begin{theorem}}{\end{theorem}}
\newenvironment{lem}{\begin{lemma}}{\end{lemma}}
\renewcommand{\>}{\rangle}
\newcommand{\<}{\langle}
\title{Spectral upper bound on the quantum $k$-independence number of a graph}
\author{Pawel Wocjan\thanks{\texttt{Pawel.Wocjan@ibm.com}, IBM Quantum, IBM T.J. Watson Research Center, Yorktown Heights, NY 10598, USA}  \quad Clive Elphick\thanks{\texttt{clive.elphick@gmail.com}, School of Mathematics, University of Birmingham, Birmingham, UK} \quad Aida Abiad\thanks{\texttt{a.abiad.monge@tue.nl},  Department of Mathematics and Computer Science, Eindhoven University of Technology, The Netherlands, Department of Mathematics: Analysis, Logic and Discrete Mathematics, Ghent University, Belgium, Department of Mathematics and Data Science of Vrije Universiteit Brussel, Belgium}}
\date{}
\begin{document}

\maketitle
\abstract{A  well known upper bound for the independence number $\alpha(G)$ of  a graph $G$, due to Cvetkovi\'{c}, is that 
\begin{equation*}
\alpha(G) \le n^0 + \min\{n^+ , n^-\}
\end{equation*}
where $(n^+, n^0, n^-)$ is the inertia of $G$. We prove that this bound is also an upper bound for the quantum independence number $\alpha_q$(G), where $\alpha_q(G) \ge \alpha(G)$ and for some graphs $\alpha_q(G) \gg \alpha(G)$. We identify numerous graphs for which $\alpha(G) = \alpha_q(G)$, thus increasing the number of graphs for which $\alpha_q$ is known. We also demonstrate that there are graphs for which the above bound is not exact with any Hermitian weight matrix, for $\alpha(G)$ and $\alpha_q(G)$.   Finally, we show this result in the more general context of spectral bounds for the quantum $k$-independence number, where the $k$-independence number is the maximum size of a set of vertices at pairwise distance greater than $k$.}

\section{Introduction and Motivation}

Elphick and Wocjan \cite{elphick19} proved that many spectral lower bounds for the chromatic number, $\chi(G)$, are also lower bounds for the quantum chromatic number, $\chi_q(G)$. This was achieved using the linear algebra tools of pinching and twirling and a combinatorial definition of $\chi_q(G)$ due to Man\u{c}inska and Roberson \cite{mancinska16}. In a different paper Man\u{c}inska and Roberson \cite{mancinska162} defined a the quantum independence number $\alpha_q(G)$, using quantum homomorphisms, where $\alpha_q(G) \ge \alpha(G).$ Analogously to $\chi_q(G)$, the \emph{quantum independence number} $\alpha_q(G)$ is the maximum integer $t$ for which two players sharing an entangled quantum state can convince an interrogator that the graph $G$ has an independent set of size $t$. There exist graphs $G$ for which there is an exponential separation between the independence number $\alpha(G)$ and $\alpha_q(G)$ \cite{mancinska162}.

The subject of quantum graph parameters has been extensively studied in the past decade, due to its connections to a number of subjects, including quantum information theory, operator theory, combinatorics and optimisation. 
The motivation for studying quantum graph parameters is described, for example, by Cameron \emph{et al.} \cite{cameron07} and in  \cite{mancinska162} and \cite{elphick19}. We add upon this subject by defining an extension of the usual quantum independence number of a graph, called the quantum $k$-independence number, which is also motivated by its classical counterpart, the $k$-independence number of a graph. The $k$-\emph{independence number of a graph}, $\alpha_k(G)$, is the maximum size of a set of vertices at pairwise distance greater than $k$. Upper bounds on this graph parameter appeared in \cite{fh1997,f1999,abiad16,AGF2019,OShiTaoqiu2019, ACFNS2020}. Note that $\alpha_1(G) = \alpha(G)$.  The quantum $k$-independence number can be regarded as a generalisation of the (classical) $k$-independence number. As set out in Definition~\ref{def:quantumkindependencenumber}, $\alpha_{kq}(G)$ is defined using $d$-dimensional orthogonal projectors, and $\alpha_k(G)$ corresponds to $d = 1$. It is also worth mentioning that in quantum information theory, $(\alpha_{kq}(G) - \alpha_k(G))$ is a measure of the benefit of quantum entanglement.

 In this article we prove an inertial spectral upper bound for the quantum counterpart of the $k$-independence number, $\alpha_{kq}$, which is the maximum integer $t$ for which two players sharing an entangled quantum state can convince an interrogator that the graph $G$ has a $k$-independent set of size $t$. Recall that it is not known whether quantum counterparts of $\alpha$ or $\chi$ are computable functions \cite{MR}. In this work, we identify numerous graphs for which $\alpha(G) = \alpha_q(G)$, hence increasing the number of graphs for which $\alpha_q$ is computed. Also, as a corollary of our main result (the inertia bound is also valid to upper bound the quantum $k$-independence number), one can use the methods proposed in \cite{ACFNS2020} to optimize our inertial bound and find exact values for $\alpha_{kq}$.

\section{Definitions and Notation}

The quantum independence number was originally defined using quantum homomorphisms and can also be defined using nonlocal games. In this work we require a combinatorial definition of $\alpha_q$, such as the one which appears in \cite{laurent15} (see Definition 2.8). This is generalised to $\alpha_{kq}$ as follows, where for matrices $X,Y\in \mathbb{C}^{d\times d}$, the trace inner product (also called Hilbert-Schmidt inner product) is defined as
\begin{equation*}
\<X, Y\>_{\mathrm{tr}} = \mathrm{tr}(X^\dagger Y)\,.
\end{equation*}

Clearly $k = 1$ corresponds to the quantum independence number $\alpha_q(G)$. 

\begin{definition}\label{def:quantumkindependencenumber} The \emph{quantum $k$-independence number} of a graph $G = (V,E)$, denoted by $\alpha_{kq}$, is the maximum
integer $t$ for which there exist orthogonal projectors $P^{(u,i)} \in \mathbb{C}^{d\times d}$ for $u\in V(G)$, $i\in [t]$
satisfying the following conditions:


\begin{equation} \label{cond1}
\displaystyle\sum_{u\in V}P^{(u,i)}=I_d \text{ for all }i\in [t],
\end{equation}

\begin{equation} \label{cond2}
\langle P^{(u,i)},P^{(u,j)}\rangle_{\text{tr}}=0\quad \text{ for all }i\neq j\in [t], \text{ for all }u\in V(G),
\end{equation}

\begin{equation} \label{cond3}
\begin{split}
\langle P^{(u,i)},P^{(v,j)}\rangle_{\text{tr}}=0\quad &\text{ for all }i\neq j\in [t], \text{ for all }u,v\in V(G) \\
 & \text{ with dist}(u,v)\leq k.
\end{split}
\end{equation}

\end{definition}

Definition \ref{def:quantumkindependencenumber} can also be written in terms of nonlocal games. We adopt the notation from \cite{mancinska161}. A \emph{nonlocal game} is specified by four finite sets $A,B,Q,R$, a probability distribution $\pi$ on $Q\times R$ and a Boolean predicate $V: A\times B\times Q \times R \longrightarrow \{0,1\}$. The game proceeds as follows: Using  $\pi$ the interrogator samples a pair $(q,r) \in Q\times R$ and sends $q$ to Alice and $r$ to Bob. Upon receiving their questions the players respond
with $a\in A$ and $b\in B$, respectively. The players have knowledge of the distribution  $\pi$ and the predicate $V$ and
can agree on a common strategy before the start of the game, but they are not allowed to communicate after they receive their questions.

In the $(G, t,k)$-independent set game the players aim to convince an
interrogator that a graph $G$ contains a $k$-independent set of size $t$ (i. e., a set of $t$ vertices at distance greater than $k$ from each other). To
play the game the interrogator selects uniformly at random a pair of indices $(i,j) \in [t]\times [t]$ and sends $i$ to Alice
and $j$ to Bob. The players respond with vertices $u,v\in V(G)$ respectively. In order to win, the players need
to respond with the same vertex of $G$ whenever they receive the same index. Furthermore, if they receive
$i\neq j \in [t]$ they need to respond with two distinct vertices of $G$ at distance greater than $k$ from each other. 
 
 We can simplify the proof of our upper bound for $\alpha_{kq}(G)$ by defining a k-projective packing number of a graph, denoted by $\alpha_{kp}(G)$. We will then prove that $\alpha_{kq}(G) \le \alpha_{kp}(G)$, and that our spectral bound is an upper bound for $\alpha_{kp}(G).$

\begin{definition}\label{def:quantumkprojectivepacking}
A \emph{$d$-dimensional $k$-projective packing} of a graph $G = (V,E)$, denoted by $\alpha_{kp}$, is a collection of
orthogonal projectors $P^{(u)} \in \mathbb{C}^{d\times d}$ such that

\begin{equation}\label{eq:orthogonality}
    \langle P^{(u)},P^{(v)}\rangle_{\text{tr}}=0
\end{equation}

for all $u,v\in V(G)$ at distance at most $k$. The value of a projective packing using projectors
$P^{(u)} \in \mathbb{C}^{d\times d}$ is defined as 

$$\frac{1}{d}\displaystyle\sum_{v\in V}r^{(u)},$$
where $r^{(u)}$ denote the ranks of the projectors $P^{(u)}$. The k-projective packing number $\alpha_{kp}(G)$ of a
graph $G$ is defined as the supremum of the values over all projective packings of the
graph $G$. If $k = 1$, then $\alpha_{kp}(G) = \alpha_p(G)$, which is the projective packing number of $G$.
\end{definition}

In order to prove our main result we need the following two lemmas.

\begin{lemma}\label{lem:quantumkindepnumberquantumkdimprojectivepacking}
$\alpha_{kq}(G)\leq \alpha_{kp}(G)$.
\end{lemma}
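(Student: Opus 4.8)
The plan is to exhibit, from any winning quantum strategy for the $(G,t,k)$-independent set game (equivalently, from any family of projectors $P^{(u,i)}$ satisfying conditions~\eqref{cond1}--\eqref{cond3}), a $d$-dimensional $k$-projective packing whose value is at least $t$. The natural construction is to pick a single index, say $i=1$, and set $Q^{(u)} := P^{(u,1)}$ for each $u \in V(G)$. I would first check that these $Q^{(u)}$ form a legitimate $k$-projective packing: if $u,v$ are at distance at most $k$ and $u \neq v$, then I need $\langle Q^{(u)}, Q^{(v)}\rangle_{\mathrm{tr}} = \langle P^{(u,1)}, P^{(v,1)}\rangle_{\mathrm{tr}} = 0$. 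This does \emph{not} follow directly from \eqref{cond3}, which only gives orthogonality across \emph{distinct} indices; so the single-index slice is not quite enough, and this is the main subtlety to address.

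To get around this, the cleaner approach is a block-diagonal construction in dimension $td$. For each $u \in V(G)$ define
\begin{equation*}
Q^{(u)} := \bigoplus_{i=1}^{t} P^{(u,i)} \in \mathbb{C}^{td \times td},
\end{equation*}
i.e.\ the $td \times td$ block-diagonal matrix with the $P^{(u,i)}$ on the diagonal. Each $Q^{(u)}$ is an orthogonal projector since each block is. For the orthogonality condition~\eqref{eq:orthogonality}, note that for $u \neq v$ at distance at most $k$,
\begin{equation*}
\langle Q^{(u)}, Q^{(v)}\rangle_{\mathrm{tr}} = \sum_{i=1}^{t} \langle P^{(u,i)}, P^{(v,i)}\rangle_{\mathrm{tr}},
\end{equation*}
and here I still only have \eqref{cond3} for $i \neq j$. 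So the honest fix is to first symmetrize the projectors over the index set: given the original family, the game-theoretic winning condition forces, for each fixed pair $u \neq v$ with $\mathrm{dist}(u,v)\le k$, that $P^{(u,i)}$ and $P^{(v,i)}$ cannot simultaneously be ``large,'' and one argues via \eqref{cond1} and \eqref{cond2} that one may assume $P^{(u,i)} = P^{(v,i)} \Rightarrow$ contradiction with same-vertex orthogonality, hence $\langle P^{(u,i)}, P^{(v,i)}\rangle_{\mathrm{tr}} = 0$ as well when $u\ne v$. (This is exactly the standard step in the $\alpha_q \le \alpha_p$ proof of Mančinska--Roberson/Laurent--Piovesan.) I expect to cite or reproduce that argument: conditions \eqref{cond1} and \eqref{cond2} say that for each $i$, the $\{P^{(u,i)}\}_{u}$ sum to $I_d$ and are pairwise ``orthogonal across $i$'' at each vertex, which pins down the ranks.

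Once orthogonality is established, the value computation is routine: using \eqref{cond1} with the trace, $\sum_{u \in V} \mathrm{tr}(P^{(u,i)}) = \mathrm{tr}(I_d) = d$ for every $i$, so $\sum_{u \in V} r^{(u)}$ in the block construction equals $\sum_{i=1}^t \sum_{u\in V} \mathrm{tr}(P^{(u,i)}) = td$, and dividing by the ambient dimension $td$ gives value exactly $t$. Hence $\alpha_{kp}(G) \ge t$ for every achievable $t$, and taking the supremum over winning strategies yields $\alpha_{kq}(G) \le \alpha_{kp}(G)$. The main obstacle, as flagged above, is cleanly deriving the missing orthogonality $\langle P^{(u,i)}, P^{(v,i)}\rangle_{\mathrm{tr}} = 0$ for $u \neq v$ at distance $\le k$ from \eqref{cond1}--\eqref{cond3} alone; everything else is bookkeeping with traces of projectors.
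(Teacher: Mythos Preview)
Your direct-construction approach is different from the paper's proof, which simply observes that $\alpha_{kq}(G)=\alpha_q(G^{[k]})$ and $\alpha_{kp}(G)=\alpha_p(G^{[k]})$ for the graph $G^{[k]}$ obtained by making all pairs at distance at most $k$ adjacent, and then cites the known inequality $\alpha_q\le\alpha_p$. A direct argument along your lines can certainly be made to work, but as written your proposal contains a genuine error and a misplaced difficulty.

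First, the block-diagonal construction does not give what you claim. With $Q^{(u)}=\bigoplus_{i=1}^t P^{(u,i)}\in\mathbb{C}^{td\times td}$ you correctly compute $\sum_u \mathrm{rank}(Q^{(u)})=\sum_i\sum_u\mathrm{tr}(P^{(u,i)})=td$, but the ambient dimension is $td$, so the value is $td/(td)=1$, not $t$. The construction that actually yields value $t$ is the ordinary sum $Q^{(u)}:=\sum_{i=1}^t P^{(u,i)}$ in $\mathbb{C}^{d\times d}$: this is an orthogonal projector because condition~\eqref{cond2} forces $P^{(u,i)}P^{(u,j)}=0$ for $i\neq j$, and then $\frac{1}{d}\sum_u\mathrm{tr}(Q^{(u)})=\frac{1}{d}\sum_i\mathrm{tr}(I_d)=t$.

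Second, the orthogonality you flag as the ``main obstacle'' is in fact immediate from condition~\eqref{cond1} alone and does not need any game-theoretic reasoning. Orthogonal projectors summing to the identity are automatically pairwise orthogonal: from $\sum_u P^{(u,i)}=I_d$ one gets $\sum_{v\neq u}\mathrm{tr}(P^{(u,i)}P^{(v,i)})=0$, and each summand $\mathrm{tr}(P^{(u,i)}P^{(v,i)})=\mathrm{tr}(P^{(u,i)}P^{(v,i)}P^{(u,i)})\ge 0$, hence each vanishes. This gives $\langle P^{(u,i)},P^{(v,i)}\rangle_{\mathrm{tr}}=0$ for all $u\neq v$ (not just those at distance $\le k$). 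With this observation and the correct sum construction, conditions~\eqref{cond1}--\eqref{cond3} combine to give $\langle Q^{(u)},Q^{(v)}\rangle_{\mathrm{tr}}=\sum_{i,j}\langle P^{(u,i)},P^{(v,j)}\rangle_{\mathrm{tr}}=0$ for $u\neq v$ with $\mathrm{dist}(u,v)\le k$, and the proof goes through.
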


\begin{proof}
It is known that the $k$-independence number of a graph $G$ is precisely the independence number of the graph formed by making all pairs of vertices of $G$ at distance at most $k$ adjacent. Denote this latter graph by $G^{[k]}$. Then $\alpha_{kq}(G)=\alpha_q(G^{[k]})$ and $\alpha_{kp}=\alpha_{p}(G^{[k]})$. Using that $\alpha_q\leq \alpha_p$ \cite{mancinska161}, it holds that $\alpha_{kq}(G)\leq \alpha_{kp}(G)$. 
\end{proof}

We will use the following result to reformulate the conditions on the orthogonal projectors of a k-projective packing as conditions on their eigenvectors.  We omit the proof of this basic result.
\begin{lem}\label{lem:elementary}
Let $P, Q\in\mathbb{C}^{d\times d}$ be two arbitrary orthogonal projectors of rank $r$ and $s$, respectively. Let
\[
P = \sum_{k\in[r]} |\psi_k\>\<\psi_k| \quad\mbox{and}\quad Q = \sum_{\ell\in[s]} |\phi_\ell\>\<\phi_\ell|
\]
denote their spectral resolutions, respectively.  Then, the following two conditions are equivalent:
\begin{align}
\< P, Q\>_{\mathrm{tr}} = 0 & \\
\<\psi_k | \phi_\ell \> = 0 & \quad \mbox{for all}\quad k\in[r], \ell\in[s]\,.
\end{align}
\end{lem}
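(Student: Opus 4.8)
The plan is to expand the trace inner product directly in terms of the two given spectral resolutions and observe that it becomes a finite sum of non-negative real numbers, so that it vanishes precisely when every summand vanishes.

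First, since $P$ is an orthogonal projector it is Hermitian, hence $\langle P, Q\rangle_{\mathrm{tr}} = \mathrm{tr}(P^\dagger Q) = \mathrm{tr}(PQ)$. Substituting $P = \sum_{k\in[r]}|\psi_k\>\<\psi_k|$ and $Q = \sum_{\ell\in[s]}|\phi_\ell\>\<\phi_\ell|$, using linearity of the trace and the identity $\mathrm{tr}\big(|\psi_k\>\<\psi_k|\phi_\ell\>\<\phi_\ell|\big) = \<\phi_\ell|\psi_k\>\<\psi_k|\phi_\ell\> = |\<\psi_k|\phi_\ell\>|^2$, one obtains
\[
\langle P, Q\rangle_{\mathrm{tr}} \;=\; \sum_{k\in[r]}\sum_{\ell\in[s]} |\<\psi_k|\phi_\ell\>|^2 .
\]
The right-hand side is a finite sum of non-negative reals, so it equals $0$ if and only if each term $|\<\psi_k|\phi_\ell\>|^2$ equals $0$, i.e.\ if and only if $\<\psi_k|\phi_\ell\> = 0$ for all $k\in[r]$, $\ell\in[s]$. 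This proves both implications simultaneously.

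An equivalent route, which postpones any reference to the eigenvectors, is to use $P = P^2$, $Q = Q^2$ and $(PQ)^\dagger = QP$ to write $\mathrm{tr}(PQ) = \mathrm{tr}(QPPQ) = \mathrm{tr}\big((PQ)^\dagger (PQ)\big) \ge 0$, with equality if and only if $PQ = 0$; the latter says exactly that the range of $Q$ is contained in the kernel of $P$, i.e.\ that every $\phi_\ell$ is orthogonal to every $\psi_k$. There is no genuine obstacle here: the only points needing a word of care are that an orthogonal projector is Hermitian (so the trace inner product reduces to $\mathrm{tr}(PQ)$) and that the vectors appearing in a spectral resolution are orthonormal eigenvectors, which is what makes the expansion collapse to the displayed sum of squared magnitudes.
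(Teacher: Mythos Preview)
Your argument is correct: expanding $\mathrm{tr}(PQ)$ via the spectral resolutions yields $\sum_{k,\ell}|\langle\psi_k|\phi_\ell\rangle|^2$, and a sum of nonnegative terms vanishes iff each term does. The paper in fact omits the proof of this lemma entirely, calling it a basic result, so there is nothing to compare against; your write-up fills the gap cleanly, and the alternative route via $\mathrm{tr}(PQ)=\mathrm{tr}((PQ)^\dagger PQ)\ge 0$ is a nice addition though not needed.
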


\section{Spectral bound for \texorpdfstring{$\alpha_{kq}(G)$}{alphakq(G)} and \texorpdfstring{$\alpha_q(G)$}{alphaq(G)}}

Let $A$ denote the adjacency matrix of $G$ and let $p_k(A)\in\mathbb{R}_k[x]$ denote a polynomial function of $A$ of degree at most $k$. Let
\begin{align*}
W(p_k) &= \max_{u \in V} \big\{ p_k(A)_{uu} \big\}, \\
w(p_k) &= \min_{u \in V} \big\{p_k(A)_{uu} \big\}.
\end{align*}
If $p_k(A) = A^k$ then $W(p_k)$ is the maximum number of closed walks of length $k$ where the maximum is taken over all vertices and $w(p_k) $ is the minimum number of closed walks of length $k$.

Abiad \emph{et al.} \cite{AGF2019} proved the following result.

\begin{theorem}\cite{AGF2019}
Let $\lambda_1 \ge ... \ge \lambda_n$ denote the eigenvalues of the adjacency matrix $A$ of a graph $G$, and let $p_k\in \mathbb{R}_k[x]$. Then, 
\begin{equation*}
\alpha_k(G) \le \min\big\{ \big| i : p_k(\lambda_i) \ge w(p_k) \big|,\, \big| i : p_k(\lambda_i) \le W(p_k) \big| \big\}.
\end{equation*}
\end{theorem}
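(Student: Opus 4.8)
The plan is to combine an elementary walk-counting observation with Cauchy eigenvalue interlacing applied to the principal submatrix of $p_k(A)$ indexed by a maximum $k$-independent set. The key structural fact is that, for a set of vertices at pairwise distance greater than $k$, the matrix $p_k(A)$ restricted to that set is diagonal; interlacing then constrains how many eigenvalues of $p_k(A)$ can lie on each side of the diagonal range $[w(p_k), W(p_k)]$.

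First I would record the observation that drives everything: if $u$ and $v$ are vertices with $\mathrm{dist}(u,v) > k$, then there is no walk of length at most $k$ joining them, so $(A^\ell)_{uv} = 0$ for every $\ell \le k$. Since $p_k$ has degree at most $k$, this gives $p_k(A)_{uv} = 0$ for every such pair. Next, let $U$ be a $k$-independent set of maximum size $m := \alpha_k(G)$, so any two distinct vertices of $U$ are at distance greater than $k$. Consider the principal submatrix $B := p_k(A)[U]$ of the (real symmetric) matrix $M := p_k(A)$. By the previous step every off-diagonal entry of $B$ vanishes, so $B$ is the diagonal matrix with entries $p_k(A)_{uu}$ for $u \in U$. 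Hence the eigenvalues of $B$ are exactly these diagonal entries, and each lies in the interval $[w(p_k), W(p_k)]$ by the definitions of $w(p_k)$ and $W(p_k)$.

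Then I would invoke Cauchy interlacing. Writing $\theta_1 \ge \cdots \ge \theta_n$ for the eigenvalues of $M$ in decreasing order — these form a permutation of $p_k(\lambda_1), \ldots, p_k(\lambda_n)$ — and $\mu_1 \ge \cdots \ge \mu_m$ for those of $B$, interlacing gives $\theta_{n-m+i} \le \mu_i \le \theta_i$ for each $i \in [m]$. Since $\mu_m \ge w(p_k)$, the inequality $\mu_m \le \theta_m$ forces $\theta_m \ge w(p_k)$, so the $m$ largest eigenvalues $\theta_1, \ldots, \theta_m$ of $M$ are all at least $w(p_k)$; therefore $\big| \{ i : p_k(\lambda_i) \ge w(p_k) \} \big| \ge m$. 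Symmetrically, $\mu_1 \le W(p_k)$ together with $\theta_{n-m+1} \le \mu_1$ yields $\theta_{n-m+1} \le W(p_k)$, so the $m$ smallest eigenvalues are all at most $W(p_k)$ and $\big| \{ i : p_k(\lambda_i) \le W(p_k) \} \big| \ge m$. Taking the minimum of the two counts gives the claimed bound on $\alpha_k(G) = m$.

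The step I expect to demand the most care is the bookkeeping in the final paragraph. Because $p_k$ need not be monotone, the eigenvalues of $M$ are the values $p_k(\lambda_i)$ but not in the order inherited from $\lambda_1 \ge \cdots \ge \lambda_n$. This is precisely why the conclusion must be phrased as a cardinality count rather than a bound on a single indexed eigenvalue, and I would be careful to state the interlacing consequences in terms of \emph{how many} of the $\theta_i$ (equivalently, how many of the $p_k(\lambda_i)$) fall above $w(p_k)$ and below $W(p_k)$, rather than trying to identify which particular ones do.
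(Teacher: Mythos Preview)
Your argument is correct and is the standard interlacing proof of this result. Note, however, that the paper does not actually supply its own proof of this statement: the theorem is quoted from \cite{AGF2019} and stated without proof. That said, your approach is precisely the $d=1$ specialization of the paper's proof of Theorem~\ref{thm:main}: there one builds the matrix $S$ whose columns are the tensor vectors $|\Psi^{(u,i)}\rangle = |u\rangle \otimes |\psi^{(u,i)}\rangle$, observes that $S^\dagger(p_k(A)\otimes I_d)S$ is diagonal with entries $(p_k(A))_{uu}$, and applies interlacing to $p_k(A)\otimes I_d$; when $d=1$ this collapses exactly to taking the principal submatrix $p_k(A)[U]$ on a maximum $k$-independent set $U$ and interlacing against $p_k(A)$, which is what you do. So there is no discrepancy to report.
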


If we let $k = 1$ and $p_k(A) = A$, this reduces to the well known inertia bound due to Cvetkovi\'{c} \cite{cvetkovic73}:

\begin{theorem} \cite{cvetkovic73} The independence number of a graph is bounded from above by
\begin{equation*}
\alpha(G) \le n^0(A) + \min\{n^+(A) , n^-(A)\},
\end{equation*}
where $(n^0, n^-, n^+)$ are the numbers of zero, negative and positive eigenvalues of $A$.
\end{theorem}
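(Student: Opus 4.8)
The plan is to obtain this statement as the special case $k=1$, $p_k(x)=x$ of the Abiad et al.\ theorem cited immediately above. First I would record that for the adjacency matrix $A$ of a simple graph every diagonal entry vanishes, so that with $p_1(A)=A$ one has $W(p_1)=\max_u A_{uu}=0$ and $w(p_1)=\min_u A_{uu}=0$. Substituting $p_1(\lambda_i)=\lambda_i$ and $w(p_1)=W(p_1)=0$ into the bound for $\alpha_1(G)=\alpha(G)$ then yields
\[
\alpha(G)\le\min\bigl\{\,|\{i:\lambda_i\ge 0\}|,\ |\{i:\lambda_i\le 0\}|\,\bigr\}.
\]

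Next I would translate the two eigenvalue counts into inertia quantities. The index set $\{i:\lambda_i\ge 0\}$ consists exactly of the positive and the zero eigenvalues, so its cardinality is $n^+(A)+n^0(A)$; likewise $|\{i:\lambda_i\le 0\}|=n^-(A)+n^0(A)$. Hence the displayed bound reads $\alpha(G)\le\min\{n^++n^0,\ n^-+n^0\}=n^0+\min\{n^+,n^-\}$, which is precisely the asserted inequality. The point that makes the two branches of the minimum symmetric about the same threshold is exactly that the diagonal of $A$ is identically zero, forcing $w(p_1)=W(p_1)=0$; for a shifted or weighted matrix these two cutoffs would in general differ and the clean inertia form would be lost.

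For robustness I would also sketch a self-contained derivation via Cauchy interlacing, independent of the cited theorem. Let $S$ be an independent set with $|S|=\alpha(G)$; the principal submatrix $A[S]$ is the $\alpha\times\alpha$ zero matrix, all of whose eigenvalues are $0$. Interlacing gives $\lambda_i(A)\ge\lambda_i(A[S])=0$ for $i\in[\alpha]$ and $\lambda_{n-\alpha+i}(A)\le\lambda_i(A[S])=0$ for $i\in[\alpha]$, so $A$ has at least $\alpha$ nonnegative and at least $\alpha$ nonpositive eigenvalues, i.e.\ $n^++n^0\ge\alpha$ and $n^-+n^0\ge\alpha$; taking the weaker of the two constraints recovers the bound. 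There is no serious obstacle, since the result is a direct corollary; the only thing to get right is the bookkeeping of which eigenvalue counts absorb the zero eigenvalues, together with the observation that the vanishing diagonal forces $w(p_1)=W(p_1)=0$ so that both counting expressions are evaluated at the same cutoff.
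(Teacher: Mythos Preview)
Your proposal is correct and follows exactly the approach the paper indicates: the paper simply remarks that setting $k=1$ and $p_k(A)=A$ in the Abiad et al.\ theorem reduces to Cvetkovi\'{c}'s bound, without spelling out the details you provide about $w(p_1)=W(p_1)=0$ and the translation of the eigenvalue counts into inertia quantities. Your additional self-contained interlacing argument is a nice bonus that the paper does not include.
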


Our principal result is as follows.

\begin{thm}\label{thm:main}
Let $\lambda_1 \ge \ldots \ge \lambda_n$ denote the eigenvalues of the adjacency matrix $A$ of a graph $G$, and let $p_k\in \mathbb{R}_k[x]$. Then 
\begin{equation*}
\alpha_{kq}(G) \le \alpha_{kp}(G) \le 
\min\big\{ \big| i : p_k(\lambda_i) \ge w(p_k) \big|,\,  \big|i : p_k(\lambda_i) \le W(p_k) \big| \big\}.
\end{equation*}

\end{thm}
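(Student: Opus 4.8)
The plan is to prove the second inequality $\alpha_{kp}(G) \le \min\{\,|i : p_k(\lambda_i) \ge w(p_k)|,\ |i : p_k(\lambda_i) \le W(p_k)|\,\}$; the first inequality $\alpha_{kq}(G) \le \alpha_{kp}(G)$ is already Lemma~\ref{lem:quantumkindepnumberquantumkdimprojectivepacking}. So fix a $d$-dimensional $k$-projective packing $\{P^{(u)}\}_{u \in V}$ with ranks $r^{(u)}$ and value $\frac{1}{d}\sum_{u} r^{(u)}$; it suffices to bound this value by the claimed quantity, then take the supremum.

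The key idea is to build one big block matrix and compare two of its quadratic-form-type invariants. Using Lemma~\ref{lem:elementary}, expand each $P^{(u)} = \sum_{\ell \in [r^{(u)}]} |\psi^{(u)}_\ell\rangle\langle\psi^{(u)}_\ell|$ and collect all these eigenvectors into the columns of a single $d \times N$ matrix $U$, where $N = \sum_u r^{(u)}$. The orthogonality condition \eqref{eq:orthogonality} together with Lemma~\ref{lem:elementary} says that any two eigenvectors coming from vertices $u,v$ at distance $\le k$ (including $u = v$, since within a single projector the eigenvectors are orthonormal) are orthogonal. Now form the Gram-type matrix $M = U^\dagger \, p_k(A)' \, U$ — more precisely, I would index rows/columns of $U$ by pairs $(u,\ell)$ and consider the $N \times N$ matrix whose $((u,\ell),(v,m))$ entry is $\langle \psi^{(u)}_\ell |\,?\,| \psi^{(v)}_m\rangle$ built so that its block structure mirrors $p_k(A)$: the crucial point is that $p_k(A)_{uv} = 0$ is \emph{not} guaranteed, but $p_k(A)_{uv}$ only enters for $u,v$ with $\mathrm{dist}(u,v) \le k$, exactly the pairs where the corresponding $U$-columns are orthogonal. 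This is the standard interlacing-style trick behind the classical Cvetkovi\'c and Abiad--Gavrilyuk--Fiol bounds, and the orthogonality of the projective packing is precisely what makes it go through in the quantum setting.

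Concretely, I expect to consider the $N \times N$ matrix $C$ with $C_{(u,\ell),(v,m)} = p_k(A)_{uv}\,\langle\psi^{(u)}_\ell|\psi^{(v)}_m\rangle$ (or a suitably symmetrized/Hermitian version), observe that its diagonal blocks are $p_k(A)_{uu}$ times an identity (diagonal entries $p_k(A)_{uu} \in [w(p_k), W(p_k)]$) while its off-diagonal blocks within distance $\le k$ vanish by orthogonality and those at distance $> k$ are multiples of $\langle\psi^{(u)}_\ell|\psi^{(v)}_m\rangle$. Then write $C$ as a compression/transform of $p_k(A)$: there should be an identity of the form $C = \widehat{U}^\dagger (p_k(A) \otimes I_d)\,\widehat{U}$ for an appropriate isometry-like $\widehat{U}$ assembled from the $\psi^{(u)}_\ell$, which forces $C \preceq W(p_k)\cdot(\text{positive part count})$ in the eigenvalue-counting sense and symmetrically for $w(p_k)$. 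Counting: the number of eigenvalues of $C$ that are $\le W(p_k)$ is at least $N - |\{i : p_k(\lambda_i) > W(p_k)\}| \cdot d$ by Cauchy interlacing applied to $p_k(A)\otimes I_d$ (whose spectrum is the $\lambda_i$ with multiplicity $d$), while on the other hand the diagonal of $C$ lies in $[w(p_k),W(p_k)]$, which via an averaging/trace argument over the diagonal blocks forces $N \le d \cdot |\{i : p_k(\lambda_i) \le W(p_k)\}|$; the symmetric argument gives $N \le d \cdot |\{i : p_k(\lambda_i) \ge w(p_k)\}|$. Dividing by $d$ and taking the supremum over packings yields the theorem.

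The main obstacle I anticipate is the bookkeeping of the transform $\widehat U$: unlike the classical case where one intersects an independent-set indicator subspace with an eigenspace, here each vertex contributes an $r^{(u)}$-dimensional chunk and these chunks are \emph{not} mutually orthogonal across large distances, so the matrix $\widehat U$ need not have orthonormal columns globally and $C$ is a genuine congruence/compression rather than a principal submatrix. Getting a clean interlacing statement will require either (i) normalizing so that $\widehat U$ is a partial isometry and invoking the Cauchy interlacing theorem for compressions $B \mapsto S^\dagger B S$ with $S^\dagger S = I$, or (ii) arguing directly with the trace of $C$ against indicator projections onto the spectral subspaces of $p_k(A)\otimes I_d$. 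I would pursue route (i), checking carefully that the columns of $\widehat U$ indexed by a fixed vertex $u$ are orthonormal (true, since they're eigenvectors of the rank-$r^{(u)}$ projector $P^{(u)}$) and that columns from distinct vertices at distance $\le k$ are orthogonal (true by \eqref{eq:orthogonality} and Lemma~\ref{lem:elementary}) — and then observing that the block-diagonal and the within-distance-$k$ off-diagonal structure of $C$ is all that the interlacing count needs, so the non-orthogonality at distance $>k$ is harmless.
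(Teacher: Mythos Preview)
Your approach is the paper's, and the obstacle you anticipate dissolves once you notice two facts you have almost in hand. First, take $\widehat{U}$ to have columns $|u\rangle \otimes |\psi^{(u)}_\ell\rangle \in \mathbb{C}^n \otimes \mathbb{C}^d$ (this is forced on you by the shape of $p_k(A)\otimes I_d$ anyway). Then \emph{all} columns of $\widehat{U}$ are orthonormal: for distinct vertices $u\ne v$ the first tensor factor already gives $\langle u|v\rangle=0$ regardless of their distance in $G$, and within a single vertex the $|\psi^{(u)}_\ell\rangle$ are orthonormal. So $\widehat{U}$ is a genuine isometry and plain Cauchy interlacing applies---there is no ``non-orthogonality at distance $>k$'' to argue around. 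Second, for $u\ne v$ at distance greater than $k$ one has $(A^\ell)_{uv}=0$ for every $\ell\le k$ (no walk of length at most $k$ joins them), hence $(p_k(A))_{uv}=0$; this is precisely where the hypothesis $p_k\in\mathbb{R}_k[x]$ enters. Combining the two, your matrix $C=\widehat{U}^\dagger(p_k(A)\otimes I_d)\widehat{U}$ is not merely block-structured but exactly \emph{diagonal}, with diagonal entries $(p_k(A))_{uu}\in[w(p_k),W(p_k)]$. Interlacing against the spectrum of $p_k(A)\otimes I_d$ (each $p_k(\lambda_i)$ repeated $d$ times) then yields $N\le d\cdot|\{i:p_k(\lambda_i)\ge w(p_k)\}|$ and the companion bound with $W(p_k)$, and dividing by $d$ finishes. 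This is exactly what the paper does; your proposed workarounds (i) and (ii) are unnecessary.
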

\begin{proof}

Let $G^{[k]}$ denote the graph defined as in Lemma~\ref{lem:elementary} and $A^{[k]}$ denote its adjacency matrix. 
Let $A^\ell$ denote the $\ell$th power of the adjacency matrix $A$ where $\ell\le k$. Let $u,v\in V$ be two arbitrary but different vertices of $V$.  It is clear that $(A^{[k]})_{uv}=0$ implies $(A^\ell)_{uv}=0$.  Consequently $(p_k(A))_{uv}=0$ for all $p_k\in \mathbb{R}_k[x]$ since $p_k(A)$ is a linear combination of the powers $A^0,\ldots,A^k$.

Using Lemma~\ref{lem:elementary} it is possible to obtain an equivalent formulation of a $k$-projective packing in terms of vectors instead of projectors.  We can equivalently consider the orthonormal vectors
\begin{equation*}
    |\Psi^{(u,i)}\> = |u\> \otimes |\psi^{(u,i)}\>
\end{equation*}
where the spectral resolutions of the projectors $P^{(u)}$ are given by
\begin{equation*}
    P^{(u)} = \sum_{i=1}^{r^{(u)}} |\psi^{(u,i)}\>\<\psi^{(u,i)}|.
\end{equation*}
The orthogonality condition (\ref{eq:orthogonality}) then directly translates to
\begin{equation*}
    \<\Psi^{(u,i)} | \left( A^{[k]}\otimes I_d \right) |\Psi^{(v,j)}\> = 
    \delta_{uv} \cdot \delta_{ij} \cdot (A^{[k]})_{uv}.
\end{equation*}
Now let $S$ denote the matrix whose columns are $|\Psi^{(u,i)}\>$ for $u\in V$ and $i\in [r^{(u)}]$.
We obtain
\begin{equation*}
    S^\dagger ( p_k(A)\otimes I_d ) S = \mathrm{diag} \left( d^{(u,i)} : u\in V, i\in [r^{(u)}] \right)
\end{equation*}
where the diagonal entries are given by
\begin{equation*}
    d^{(u,i)} = (p_k(A))_{uu}.
\end{equation*}
Let $w(p_k)$ denote the minimum of $(p_k(A))_{uu}$ for $u\in V$. Using interlacing, it now follows that there must be at least $r$ eigenvalues of $p_k(A)\otimes I_d$ that are larger than the smallest eigenvalue of $D=\mathrm{diag}(d^{(u,i)})$.  The latter is equal to $w(p_k)$ by definition.  Equivalently, there must be at least
$r/d$ eigenvalues of $p_k(A)$ that are at least $w(p_k)$.

This yields the first upper bound
\begin{equation*}
    \alpha_{kp} \le \big| \big\{ j : p_k(\lambda_j) \ge w(p_k) \big\} \big|.
\end{equation*}

Let $W(p_k)$ denote the maximum of $(p_k(A))_{uu}$ for $u\in V$. The second upper bound
\begin{equation*}
    \alpha_{kp} \le \big| \left\{ j : p_k(\lambda_j) \le W(p_k) \right\} \big|
\end{equation*}
is proved analogously. 

\end{proof}

Letting $p_1(A) = A$ we immediately obtain that:

\begin{corollary}\label{cor:one}
\begin{equation*}\label{eq:quantum}
\alpha(G) \le \alpha_q(G) \le \alpha_p(G) \le n^0(A) + \min\{n^+(A) , n^-(A)\}.
\end{equation*}
\end{corollary}
Abiad \emph{et al.} \cite{abiad16} proved that, when we restrict to the case of $p_k(A)=A^k$, the spectral bound from Theorem \ref{thm:main} is tight for a certain infinite family of graphs (see section 3.1 in \cite{abiad16}).  Since the quantum $k$-independence number is sandwiched between the classical $k$-independence number and our spectral bound, we can say that for that family of graphs the above bound for $\alpha_{kq}$ is also tight and the classical and quantum parameters $\alpha_{k}$ and $\alpha_{kq}$ coincide.

Note that Theorem~\ref{thm:main} and therefore Corollary~\ref{cor:one} are valid for weighted adjacency matrices of the form $H \circ A$, where $H$ is an arbitrary Hermitian matrix and $\circ$ denotes the Hadamard product (also called the Schur product). This is because:

\[
(A^p)_{uv} = 0 \Rightarrow ((H \circ A)^p)_{uv} = 0 \mbox{   and   } (p(A))_{uv} = 0 \Rightarrow (p(H \circ A))_{uv} = 0.
\]

\section{Alternative upper bounds for \texorpdfstring{$\alpha_q(G)$}{alphaq}}
It is known (see for example Section 6.18 of \cite{roberson13}) that:
\begin{align*}
& \alpha(G) \le \alpha_q(G) \le \lfloor \vartheta'(G) \rfloor \le \vartheta'(G) \le \vartheta(G) \le \vartheta^+(G) \le \lceil \vartheta^+(G) \rceil \le \\
& \chi_q(\overline{G}) \le \chi(\overline{G}) \,,
\end{align*}
where $\vartheta'$, $\vartheta$, and $\vartheta^+$ are the Schrijver, Lov\'asz and Szegedy theta functions. 

Hoffman, in an unpublished paper, proved that for $\Delta$-regular\footnote{We use the unconventional symbol $\Delta$ instead of $d$ for the degree of regular graphs because $d$ is the dimension of the Hilbert space used in the definition of the quantum independence number.} graphs:
\begin{equation*}
\alpha(G) \le \frac {n|\lambda_n|}{\Delta + |\lambda_n|},
\end{equation*}
where $\lambda_n$ is the smallest eigenvalue of the adjacency matrix $A$. This result is typically proved using interlacing of the quotient matrix, and is known as the Hoffman bound or ratio bound. 

Lov\'asz \cite[Theorem 9]{lovasz79} proved that for $\Delta$-regular graphs:
\begin{equation*}\label{eq:thetaHoffman}
\vartheta(G) \le \frac {n|\lambda_n|}{\Delta + |\lambda_n|}.
\end{equation*}
It is therefore immediate that the Hoffman bound is an upper bound for $\alpha_q(G)$ for regular graphs.

Van Dam and Haemers \cite{dam98} proved that for \emph{any} graph 
\begin{equation*}
\alpha(G) \le \frac{n(\mu_1 - \delta)}{\mu_1}\,,
\end{equation*}
where $\delta$ is the minimum degree of $G$ and $\mu_1$ is the largest eigenvalue of the Laplacian matrix of $G$. This bound equals the Hoffman bound for regular graphs. 

Bachoc \emph{et al.} subsequently proved (see section~\ref{sec:implications} in \cite{bachoc17}), in the context of simplicial complexes, that for any graph:

\begin{equation*}
\vartheta(G) \le \frac{n(\mu_1 - \delta)}{\mu_1}.
\end{equation*}

It is therefore immediate that the van Dam and Haemers bound is an upper bound for $\alpha_q(G)$.

\section{Implications for \texorpdfstring{$\alpha_q(G)$}{alphaq}  and for \texorpdfstring{$\alpha(G)$}{alpha}}\label{sec:implications}

It follows from Theorem~\ref{thm:main} that any graph with $\alpha(G) = n^0 + \min{(n^+ , n^-)}$, has $\alpha_q = \alpha$. This is the case for numerous graphs, including odd cycles, perfect, folded cubes, Kneser, Andrasfai, Petersen, Desargues, Grotzsch, Heawood, Clebsch and Higman-Sims graphs. Furthermore if the inertia bound is tight with an appropriately chosen weight matrix then again $\alpha_q = \alpha$. This is the case for all bipartite graphs. There are also many graphs, including Chvatal, Hoffman-Singleton, Flower Snark, Dodecahedron, Frucht, Octahedron, Thomsen, Pappus, Gray, Coxeter and Folkman   for which $\alpha = \lfloor\vartheta\rfloor$, so again $\alpha_q = \alpha$. For all such graphs there are no benefits from  quantum entanglement for independence. The Clebsch graph demonstrates that the inertia bound is not an upper bound for $\lfloor \vartheta'(G) \rfloor$. 

Elzinga and Gregory \cite{elzinga10} asked whether there exists a real symmetric weight matrix $W$ for every graph $G$ such that:

\begin{equation}\label{eq:elzinga}
\alpha(G) = n^0(W) + \min{(n^+(W) , n^-(W))}?
\end{equation}

They demonstrated experimentally that this is true for all graphs with up to 10 vertices, and for vertex transitive graphs with up to 12 vertices. Sinkovic \cite{sinkovic18} subsequently proved that there is no real  symmetric weight matrix for which~(\ref{eq:elzinga}) is tight for Paley 17.  This leaves open, however,  whether there is always a Hermitian weight matrix for which~(\ref{eq:elzinga}) is exact. 

It follows from Theorem~\ref{thm:main}, that every graph with $\alpha < \alpha_q$ is a counter-example to~(\ref{eq:elzinga}) for real symmetric and Hermitian weight matrices. This leads to the question of whether ~(\ref{eq:elzinga}) is true for $\alpha_q$ or $\alpha_p$. It follows from Theorem~\ref{thm:main} that the answer is no, because for some graphs, such as the line graph of the cartesian product of $K_3$ with itself, the projective packing number is non-integral.

There are also numerous regular graphs for which the Hoffman bound on $\alpha(G)$ is exact, but the unweighted inertia bound is not. Examples include the Shrikhander, Tesseract, Hoffman and Cuboctahedral graphs. There are also many regular graphs where the floor of the Hoffman bound is exact, but the unweighted inertia bound is not. Examples include some circulant, cubic and quartic graphs. For all of these graphs, $\alpha_q = \alpha$.

Appendix A in \cite{abiad16} demonstrates, however,  that it is hard to find well known graphs for which there is equality in Theorem~\ref{thm:main} when $k \ge 2$ and $p_k(A)=A^k$. 

It would be interesting to find the graph with the smallest number of vertices that has $\alpha(G) < \alpha_q(G)$. Such a graph must have at least 11 vertices (given the experimental results due to Elzinga and Gregory).  The smallest such graph that we know of is due to Piovesan (see Figure 3.1 in \cite{piovesan16}) which has 24 vertices, with $\chi = \alpha =5, \chi_q = 4$ and $\alpha_q \ge 6$.

\section{Conclusion}

To conclude we illustrate the differences between classical and quantum graph parameters, by summarising results in \cite{mancinska162} for orthogonality graphs.  The orthogonality graph $\Omega(n)$ has vertex set the set of $\pm1$-vectors of length $n$, with two vertices adjacent if they are orthogonal. With $n$ a multiple of 4, $\chi_q(\Omega(n)) = n$ but for large enough $n$, $\chi(\Omega(n))$ is exponential in $n$.  Similarly $\alpha_q(\Omega(n) \square K_n) = 2^n$ but $\alpha(\Omega(n) \square K_n) \le n(2 - \epsilon)^n$, for some $\epsilon > 0$, where $\square$ denotes the Cartesian product.

Therefore, the spectral bounds in this paper and in \cite{elphick19} for quantum graph parameters demonstrate the weaknesses of such bounds for classical graph parameters for some families of graphs.

\subsection*{Acknowledgments}

We would like to thank David Roberson for helpful comments on an earlier version of this paper, in particular in regard to the projective packing number. We would also like to thank David Anekstein for testing various ideas for this paper experimentally. Aida Abiad would like to thank 
the  organizers  and the  participants  of  the  workshop Analytical and combinatorial aspects of quantum information theory, in particular,  Gabriel Coutinho and Laura Man\u{c}inska, for helpful discussions on the quantum definition of the $k$-independence number.

This research has been partially supported by National Science Foundation Award 1525943.



\begin{thebibliography}{99}

\bibitem{ACFNS2020} A. Abiad,  G. Coutinho, M. A. Fiol, B. D. Nogueira and S. Zeijlemaker,
\emph{Optimization of eigenvalue bounds for the independence and chromatic number of graph powers}, arXiv:2010.12649.




\bibitem{AGF2019}  A. Abiad, G. Coutinho and M. A. Fiol, \emph{On the k-independence number of graphs}, Discrete Math., 342(10), (2019), 2875 - 2885. 

\bibitem{abiad16}
A. Abiad, M. Tait and S. Cioab\u{a}, \emph{Spectral bounds for the k-independence number of a graph}, Linear Algebra and its Appl., 510, (2016), 160 - 170. 

\bibitem{bachoc17}
C. Bachoc, A. Gundert and A. Passuello, \emph{The theta number of simplicial complexes}, Israel J. Mathematics, 232, 1, (2019), 443 - 481.

\bibitem{cameron07}
P. J. Cameron, A. Montanaro, M. W. Newman, S. Severini, A. Winter, \emph{On the quantum chromatic number of a graph}, Electron. J. Combin. 14, (2007), R81.



\bibitem{cvetkovic73}
D. M. Cvetkovi\'{c}, \emph{Inequalities obtained on the basis of the spectrum of the graph}, Studia Sci. Math. Hungar. 8, (1973), 433 - 436.

\bibitem{dam98}
E. R. van Dam and W. H. Haemers, \emph{Graphs with constant $\mu$ and $\overline{\mu}$}, Discrete Math. 182, (1998), 293 - 307.

\bibitem{elphick19}
C. Elphick and P Wocjan, \emph{Spectral lower bounds for the quantum chromatic number of a graph}, J. Combinatorial Theory Ser. A, 168, (2019), 338 - 347.

\bibitem{elzinga10}
R. J. Elzinga and D. A. Gregory, \emph{Weighted matrix eigenvalue bounds on the Independence Number}, Elec. J. Linear Algebra, 20, (2010), 468 - 489.


\bibitem{f1999} M. A. Fiol, \emph{Eigenvalue interlacing and weight parameters of graphs}, Linear Algebra Appl., 290, (1999), 275 - 301.

\bibitem{fh1997}  P. Firby and J. Haviland, \emph{Independence and average distance in graphs}, Discrete Appl. Math., 75, (1997), 27 - 37.

\bibitem{laurent15}
M. Laurent and T. Piovesan, \emph{Conic approach to quantum graph parameters using linear optimization over the completely positive semidefinite cone}, SIAM J. Optimization, 25(4), (2015), 2461 - 2493.

\bibitem{lovasz79}
L. Lov\'asz, \emph{On the Shannon capacity of a graph}, IEEE Trans. Inform. Th. 25, (1979), 1 - 7.

\bibitem{mancinska16}
L. Man\u{c}inska and D. E. Roberson, \emph{Oddities of Quantum Colorings}, Baltic J. Modern Computing, 4, (2016), 846 - 859.


\bibitem{MR}
L. Man\u{c}inska and D. E. Roberson, \emph{Quantum homomorphisms}, Journal of Combinatorial
Theory B, 118, (2016), 228–267.



\bibitem{mancinska162}
L. Man\u{c}inska and D. E. Roberson, \emph{Graph homomorphisms for quantum players}, J. Combinatorial Theory Ser. B, 118, (2016), 228--267.

\bibitem{mancinska161}
L. Man\u{c}inska, D. E. Roberson and A. Varvitsiotis, \emph{On deciding the existence of perfect entangled strategies for nonlocal games}, Chicago J. Theoretical Computer Science, Article 5, (2016), 1 - 16. 

\bibitem{OShiTaoqiu2019}
S. O, Y. Shi and Z. Taoqiu, \emph{Sharp upper bounds on the k-independence number in regular graphs}, Graphs and Combinatorics 37(1) (2020), 1--16.


\bibitem{piovesan16}
T. Piovesan, \emph{Quantum entanglement: insights via graph parameters amd conic optimization}. PhD thesis, University of Amsterdam, (2016).

\bibitem{roberson13}
D. E. Roberson, \emph{Variations on a Theme: Graph Homomorphisms}, PhD thesis, University of Waterloo, (2013).

\bibitem{sinkovic18}
J. Sinkovic, \emph{A graph for which the inertia bound is not tight}, J. Algebraic Combinatorics, 47, 1 (2018), 39 - 50.



\end{thebibliography}
\end{document}